 \newtheorem{remark}{Remark}
\newtheorem{theorem}{Theorem}
\def\bbR{\mathbb{R}}
 \def\Sym{\mathrm{Sym}}
\def\Aff{\mathrm{Aff}}
\def\dy{\mathrm{d}y}
\def\dx{\mathrm{d}x}
\def\dmu{\mathrm{d}\mu}
\def\KL{\mathrm{KL}}
\def\bbR{\mathbb{R}}
\def\matrixtwotwo#1#2#3#4{\left[\begin{array}{ll}#1 & #2 \cr #3 & #4\end{array}\right]}
\def\tr{\mathrm{tr}}
\def\Cov{\mathhrm{Cov}}
\def\JS{\mathrm{JS}}
\def\calP{\mathcal{P}}
\def\abs#1{{\lvert #1 \rvert}}
\newtheorem{proposition}{Proposition}
\newtheorem{assumption}{Assumption}
\def\Cov{\mathrm{Cov}}
\def\arccosh{\mathrm{arccosh}}
\def\GL{\mathrm{GL}}
\def\dmu{\mathrm{d}\mu}
\def\bbR{\mathbb{R}}
\def\KL{\mathrm{KL}}
\def\dx{\mathrm{d}x}
\def\calP{\mathcal{P}}
\def\dmu{\mathrm{d}\mu}
\def\KL{\mathrm{KL}}
\def\JS{\mathrm{JS}}
\def\bbH{\mathbb{H}}
\def\tr{\mathrm{tr}}
\def\Cov{\mathrm{Cov}}
\def\det{\mathrm{det}}
\def\calP{\mathcal{P}}
\def\dt{\mathrm{d}t}
\def\TV{\mathrm{TV}}
\def\TV{\mathrm{TV}}
\def\Mdet#1{\mathrm{det}\left(#1\right)}
\author{
Frank Nielsen\\
Sony Computer Science Laboratories Inc\\
Tokyo, Japan\\
E-mail: {\tt Frank.Nielsen@acm.org}\\
\and 
Kazuki Okamura\\
Department of Mathematics, Faculty of Science, Shizuoka University\\
Japan\\
E-mail:  {\tt okamura.kazuki@shizuoka.ac.jp}
}
 \date{}
\begin{document}

\title{On the $f$-divergences between densities of a multivariate location or scale family}

\maketitle
 
\begin{abstract}
We first extend the result of Ali and Silvey [Journal of the Royal Statistical Society: Series B, 28.1 (1966), 131-142] 
who first reported that any $f$-divergence between two isotropic multivariate Gaussian distributions 
amounts to a corresponding strictly increasing scalar function of their corresponding Mahalanobis distance.
We report sufficient conditions on the standard probability density function generating a multivariate location family and the function generator $f$  in order to generalize this result.
This property is useful in practice as it allows to compare exactly $f$-divergences between densities of these location families via their  corresponding Mahalanobis distances, even when the $f$-divergences are not available in closed-form as it is the case, for example, for the
 Jensen-Shannon divergence or the total variation distance between densities of a normal location family.  
Second, we consider $f$-divergences between densities of multivariate scale families: We recall Ali and Silvey 's result that for normal scale families we get matrix spectral divergences, and we extend this result to densities of a scale family.
\end{abstract}

\noindent Keywords: $f$-divergence; Jensen-Shannon divergence; multivariate location-scale family; spherical distribution; affine group; multivariate normal distributions; multivariate Cauchy distributions; matrix spectral divergence.

\section{Introduction} 
Let $\bbR$ denote the real field and $\bbR_{++}$ the set of positive reals.
The $f$-divergence~\cite{csiszar1964informationstheoretische,ali1966general} induced by a convex generator $f:\bbR_{++}\rightarrow \bbR$ between two probability density functions (PDFs) $p(x)$ and $q(x)$ defined on the full support $\bbR^d$ is defined by
$$
I_f(p:q) := \int p(x)\, f\left(\frac{q(x)}{p(x)}\right)\, \dx. 
$$
It follows from Jensen's inequality that we have 
$$
I_f(p:q)\geq f\left(\int p(x)\frac{q(x)}{p(x)} \dx \right) =f(1).
$$
Thus we shall consider convex generators $f(u)$ such that $f(1)=0$. 
Moreover, in order to ensure that $I_f(p:q)=0$ if and only if ${p(x)=q(x)}$ except at countably many points $x$, we need $f(u)$ to be strictly convex at $u=1$. 
The class of $f$-divergences include the total variation distance (the only $f$-divergence up to scaling which is a metric distance~\cite{khosravifard2007confliction}), the Kullback-Leibler divergence (and its two common symmetrizations, namely, the Jeffreys divergence and the Jensen-Shannon divergence), the squared Hellinger divergence, the Pearson and Neyman $\chi^2$-divergences, etc.
The formula for those statistical divergences  with their corresponding generators are listed in Table~\ref{tab:fgenerator}.

\begin{table}
\renewcommand{\arraystretch}{1.5}
\centering
 $$
\begin{array}{lll}
\text{$f$-divergence } & \text{Formula $I_f(p:q)$} & \text {Generator $f(u)$ with $f(1)=0$}\\
\hline\hline
\text{Total variation} & \frac{1}{2}\int \lvert p(x)-q(x)\rvert \dx & \frac{1}{2} \lvert u-1\rvert \\
\text{Squared Hellinger} & \int (\sqrt{p(x)}-\sqrt{q(x)})^2 \dx & (\sqrt{u}-1)^2\\
\text{Pearson $\chi^2$}  &  \int \frac{(q(x)-p(x))^2}{p(x)} \dx & (u-1)^2\\
\text{Neyman $\chi^2$}  &  \int \frac{(p(x)-q(x))^2}{q(x)} \dx & \frac{(1-u)^2}{u}\\
\text{Kullback-Leibler} & \int p(x)\log \frac{p(x)}{q(x)} \dx & -\log u\\
\text{reverse Kullback-Leibler} & \int q(x)\log \frac{q(x)}{p(x)} \dx & u\log u \\
\text{Jeffreys} & \int (p(x)-q(x))\log \frac{p(x)}{q(x)} \dx & (u-1)\log u \\
\text{Jensen-Shannon} & h\left(\frac{p+q}{2}\right)-\frac{h(p)+h(q)}{2} &  -(u+1)\log \frac{1+u}{2} + u\log u \\
 & \multicolumn{2}{c}{\text{where\ } h(p)=\int p(x)\log\frac{1}{p(x)}\dx \text{\ is\ Shannon entropy}}
\end{array}
$$
\caption{Some common statistical divergences expressed as $f$-divergences.\label{tab:fgenerator}}
\end{table}

Let $N(\mu,\Sigma)\sim p_{\mu,\Sigma}(x)$ be a multivariate normal  (MVN) distribution with mean $\mu\in\bbR^d$ and positive-definite covariance matrix $\Sigma\in\Sym_{++}(d)$ (where $\Sym_{++}(d)$ denotes the set of positive-definite matrices), where the PDF is defined by
$$
p_{\mu,\Sigma}(x)=\frac{1}{\sqrt{\det(2\pi\Sigma)}}\, \exp\left(-\frac{1}{2}(x-\mu)^\top \Sigma^{-1} (x-\mu)\right).
$$ 
In their landmark paper, Ali and Silvey~\cite{ali1966general} (Section~6 of ~\cite{ali1966general}, pp. 141-142) mentioned 
 the following two  properties of $f$-divergences between MVN distributions:

\begin{enumerate}
	\item[P1.] The $f$-divergences $I_f(p_{\mu_1,\Sigma}:p_{\mu_2,\Sigma})$ between two MVN distributions $N(\mu_1,\Sigma)$ and $N(\mu_2,\Sigma)$ with prescribed covariance matrix $\Sigma$  is an increasing function of their Mahalanobis distance~\cite{Mahalanobis-1936} $\Delta_\Sigma(\mu_1,\mu_2)$, where
	$$
	\Delta_\Sigma^2(\mu_1,\mu_2):=(\mu_2-\mu_1)^\top\, \Sigma^{-1} (\mu_2-\mu_1).
	$$
Ali and Silvey briefly sketched a proof by considering the following property obtained from a change of variable 
$y=\frac{(x-\mu_1)^\top \Sigma^{-1} (\mu_2-\mu_1)}{\Delta_\Sigma(\mu_1,\mu_2)}\in\bbR$:
	$$
	I_f(p_{\mu_1,\Sigma}:p_{\mu_2,\Sigma})=I_f(p_{0,1}:p_{\Delta_\Sigma(\mu_1,\mu_2),1}).
	$$
	That is, the $f$-divergences between multivariate normal distributions with prescribed covariance matrix (left-hand side) amount to corresponding
	$f$-divergences between the univariate normal distribution $N(0,1)$ and $N(\Delta_\Sigma(\mu_1,\mu_2),1)$ (right-hand side).
	
	\item[P2.] The $f$-divergences $I_f(p_{\mu,\Sigma_1}:p_{\mu,\Sigma_1})$ between two $\mu$-centered MVN distributions $N(\mu,\Sigma_1)$ and 
	$N(\mu,\Sigma_2)$ is an increasing function of the terms 
	$\abs{1-\lambda_i}$'s, 
	where the $\lambda_i$'s denote the eigenvalues of matrix $\Sigma_2\Sigma_1^{-1}$.
	That is, the $f$-divergences between $\mu$-centered MVN distributions are spectral matrix divergences~\cite{kulis2009low}.
\end{enumerate}

In this paper, we investigate whether these two properties hold or not for multivariate location families and multivariate scale families which generalize the multivariate centered (same mean) normal families $N_\mu:=\{N(\mu,\Sigma)\ :\ \Sigma\in\Sym_{++}(d)\}$ 
and the multivariate isotropic (same covariance) normal distributions $N_\Sigma:=\{N(\mu,\Sigma)\ :\  \mu\in\bbR^d\}$, respectively.

We summarize our main contributions as follows:
\begin{itemize}
	\item We extend property P1 to arbitrary multivariate location families in Theorem~\ref{prop:mc} under Assumption~\ref{ass:regularity}      (i.e., spherical distribution of the standard PDF with $f(u)\in C^2$).
	\item We illustrate property P1 for the multivariate location normal distributions and the multivariate location Cauchy distributions for various $f$-divergences, and discuss practical computational applications in~Section~\ref{sec:mvuv}.
	\item We then report the spectral matrix $f$-divergences (Property P2) for the multivariate scale normal distributions for the Kullback-Leibler divergence and the $\alpha$-divergences in Section~\ref{sec:ns}, and generalize this property to 
	densities of a scale family (Proposition~\ref{prop:spectralfdiv}).	
\end{itemize}

The paper is organized as follows:
We first describe generic multivariate location-scale families including the action of the affine group on $f$-divergences in Section~\ref{sec:fdivmls1}. 
In Section~\ref{sec:fdivmls2}, we present our main theorem which generalizes property P1,  and illustrate the theorem with examples of $f$-divergences between multivariate location normal or location Cauchy families.
Finally, we show that the $f$-divergences between densities of a scale family is always a spectral matrix divergence (
Proposition~\ref{prop:spectralfdiv}).

\section{The $f$-divergences between multivariate location-scale families}\label{sec:fdivmls1}

\subsection{Multivariate location-scale families}
Let $p(x)$ be any arbitrary PDF defined on the full support $\bbR^d$.
The multivariate location-scale family $\calP$ is defined as the set of distributions with PDFs:
$$
p_{l,P}(x):= \det(P)^{-1}\, p\left( P^{-1}(x-l) \right), x \in \mathbb{R}^d.
$$ 
where the set of multivariate location-scale parameters $(l,P)$ belongs to $\bbH_d:=\bbR^d\times \Sym_{++}(d)$.
PDF $p(x)$ is called the standard density since $p(x)=p_{0,I}(x)$  where $I$ denote the $d$-dimensional identity matrix.

By considering $P=(\Sigma^{\frac{1}{2}})^2$ where $\Sigma^{\frac{1}{2}}$ denotes the unique square root of the covariance matrix $\Sigma$, and letting $l=\mu$, we can express
the PDFs of $\calP$ as 
\begin{equation}\label{eq:fdiv}
p_{\mu, \Sigma}(x) = \frac{1}{\sqrt{\det(\Sigma)}}  \,  p\left(\Sigma^{-1/2}(x-\mu)\right), x \in \mathbb{R}^d.
\end{equation}

The multivariate location-scale families generalize the univariate location-scale families with $\Sigma=\sigma^2$: 
$$
\calP=\left\{ \frac{1}{\sigma} p\left(\frac{x-\mu}{\sigma}\right) \ :\ (\mu,\sigma)\in\bbR\times\bbR_{++}\right\}.
$$

In the reminder, we shall focus on the following two multivariate location-scale families:
\begin{enumerate}
\item[MVN.] MultiVariate Normal location families: 
$$
p(x) = \frac{1}{(2\pi)^{d/2}} \exp\left(-\frac{x}{2}\right), \ x \ge 0.
$$

Notice that for $X\sim N(0,I_d)$, we have $Y=PX+l\sim N(l,PP^\top)$. 
Thus when $l=\mu$ and $P=\Sigma^{\frac{1}{2}}$, we get $Y\sim N(\mu,\Sigma)$.

\item[MVS.] MultiVariate Student location families with $\nu$ degree(s) of freedom:
$$
p(x) = \frac{\Gamma((\nu+d)/2)}{\Gamma(\nu/2) (\nu\pi)^{d/2}} \frac{1}{(1+x/\nu)^{(\nu + d)/2}}, \ x \ge 0, 
$$
where $\Gamma(t)$ is the Gamma function:
$$
\Gamma(t)=\int_0^\infty x^{t-1} e^{-x} \dx.
$$
The case of $\nu=1$ corresponds to the MultiVariate Cauchy (MVC) family.
The probability density function of a MVC is
$$
p_{\mu,\Sigma}(x)=\frac{\Gamma\left(\frac{d+1}{2}\right)}{\pi^{\frac{d+1}{2}}\Mdet{\Sigma}^{\frac{1}{2}} (1+\Delta_\Sigma(x,\mu))^{\frac{d+1}{2}}}.
$$

Let us note in passing that if random vector $X = (X_1, \dots, X_d)$ follows the standard MVC, then $X_1, \dots, X_d$ are not statistically independent~\cite{molenberghs1997non}. Thus the MVC family differs from the MVN family from that viewpoint.
\end{enumerate}

Multivariate location-scale families also include the multivariate elliptical distributions~\cite{nielsen2021information,kollo2005advanced}.

\subsection{Action of the affine group}
The family $\calP=\{p_{l,P}\ :\ (l,P)\in\bbH\}$ can also be obtained by the action (denoted by the dot $.$) of the affine group $\Aff(\bbR^d):=\bbR^d \rtimes \GL_d(\bbR)$~\cite{globke2021information} (where $\GL_d(\bbR)$ denotes the General Linear $d$-dimensional group) on the standard PDF: $\calP=\{p_{l,P}(x)=(l,P).p(x), (l,P)\in \Aff(\bbR^d)\}$,
where the group is equipped with the following semidirect product:
$$
(l_1,A_1).(l_2,A_2):=(l_1+A_1l_2, A_1A_2).
$$  
The inverse element of $(l,A)$ is $(-A^{-1}l,A^{-1})$.
One can check that
$(l,A).(-A^{-1}l,A^{-1})=(-A^{-1}l,A^{-1}).(l,A)=(0,I_d)$, where $I$ denotes the $d\times d$ identity matrix.

The affine group $\Aff(\bbR^d)$ can be handled as a matrix group by mapping its elements $(l,A)$ to corresponding $(d+1)\times (d+1)$ matrices as follows:
$$
(l,A)\leftrightarrow \matrixtwotwo{A}{l}{0}{1}.
$$
That is, $\Aff(\bbR^d)$ can be interpreted as a subgroup of $\GL(d+1)$.

\subsection{Affine group action on $f$-divergences}

The $f$-divergences 
between two PDFs $p_{l_1,P_1}$ and $p_{l_2,P_2}$ 
of a multivariate location-scale family $\calP$ are invariant under the action of the affine group~\cite{nielsen2021information}:
$$
I_f\left((l,P).p_{l_1,P_1}:(l,P).p_{l_2,P_2}\right) = I_f\left(p_{l_1,P_1}:p_{l_2,P_2}\right).
$$

Thus by choosing the inverse element $(-P_1^{-1}l_1,P_1^{-1})$ of $(l_1,P_1)$, we get the following Proposition~\cite{nielsen2021information}:

\begin{proposition}\label{prop:fdivls}
We have
\begin{eqnarray*}
I_f\left(p_{l_1,P_1}:p_{l_2,P_2}\right) &=&
I_f\left((-P^{-1}_1l,P_1^{-1}).p_{l_1,P_1}:(-P_1^{-1}l_1,P_1^{-1}).p_{l_2,P_2}\right),\\
&=& I_f\left(p_{0,I}:p_{P_1^{-1}(l_2-l_1),P_1^{-1}P_2}\right).
\end{eqnarray*}

\begin{remark}
When $f(u)=-\log u$, we get the Kullback-Leiber divergence and we have:
$$
D_\KL\left(p_{l_1,P_1}:p_{l_2,P_2}\right)=D_\KL\left(p_{0,I}:p_{P_1^{-1}(l_2-l_1),P_1^{-1}P_2}\right).
$$
Since the KLD is the difference of the cross-entropy minus the entropy~\cite{CT-1999} (hence also called relative entropy), we have
$$
D_\KL\left(p_{l_1,P_1}:p_{l_2,P_2}\right)=h\left(p_{0,I}:p_{P_1^{-1}(l_2-l_1),P_1^{-1}P_2}\right)-h(p_{0,I}),
$$
where $h(p:q)=-\int p(x)\log q(x)\dx$ is the cross-entropy between $p(x)$ and $q(x)$, and $h(p)=h(p:q)$ is the differentiable entropy.
When both $p(x)=p_{\mu_1,\Sigma_1}(x)$ and $q(x)=p_{\mu_2,\Sigma_2}(x)$ are $d$-variate normal distributions, the KLD can be decomposed as the sum a squared Mahalanobis distance $\Delta_{\Sigma_2^{-1}}(\mu_1,\mu_2)$ plus a matrix Burg divergence~\cite{davis2006differential} $D_B(\Sigma_1,\Sigma_2)$:
$$
D_\KL(p_{\mu_1,\Sigma_1}:p_{\mu_2,\Sigma_2})=\frac{1}{2}D_B(\Sigma_1,\Sigma_2)+\frac{1}{2}\Delta_{\Sigma_2^{-1}}(\mu_1,\mu_2),
$$
where the matrix Burg divergence is defined by
$$
D_B(\Sigma_1,\Sigma_2)=\tr(\Sigma_2\Sigma_1^{-1})+\log\Mdet{\Sigma_2\Sigma_1^{-1}}-d.
$$
However, the KLD between two Cauchy distributions  (viewed like normal distributions as  a location-scale family) cannot be decomposed as the sum of a squared Mahalanobis distance and another divergence~\cite{nielsen2021f}.
\end{remark}

In particular, we have for PDFs with the same scale matrix $P$, the following identity:
$$
I_f(p_{l_1,P}:p_{l_2,P})=I_f \left(p_{0,I}:p_{P(l_2-l_1)}\right)
 = I_f\left(p_{P(l_1-l_2)}:p_{0,I}\right).
$$
\end{proposition}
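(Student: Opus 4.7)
The plan is to derive both identities as direct corollaries of the affine invariance $I_f\left((l,P).p_{l_1,P_1}:(l,P).p_{l_2,P_2}\right)=I_f\left(p_{l_1,P_1}:p_{l_2,P_2}\right)$ established just before the proposition; once that invariance is taken as a black box, the remainder is pure bookkeeping inside the semidirect product, and the only real content is choosing the right group element and computing where it sends the two parameters.

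First I will choose $(l,P)$ to be the inverse of $(l_1,P_1)$ in $\Aff(\bbR^d)$, namely $(-P_1^{-1}l_1,P_1^{-1})$, which forces the first argument to collapse to the standard parameter $(0,I)$. Using the semidirect product law $(l_a,A_a).(l_b,A_b)=(l_a+A_al_b,A_aA_b)$, a direct substitution shows that the second argument becomes $(P_1^{-1}(l_2-l_1),P_1^{-1}P_2)$. Inserting these two reparameterizations into the invariance identity yields the first claim. The particular case with $P_1=P_2=P$ is then obtained by direct specialization, and the symmetric form with the two densities swapped follows from one additional translation inside the defining integral of $I_f$, using translation invariance of Lebesgue measure.

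I do not foresee any serious obstacle. The affine invariance of $I_f$ has been reduced upstream to a standard change-of-variables computation, and everything that remains is a one-line group-theoretic substitution. The only point meriting even a brief remark in the written-up proof is to observe that $(-P_1^{-1}l_1,P_1^{-1})$ is indeed the inverse of $(l_1,P_1)$ under the semidirect product, but this has already been checked in the affine group paragraph, so no further calculation is required.
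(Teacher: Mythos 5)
Your proposal is correct and follows exactly the route the paper itself takes: the proposition is stated as an immediate consequence of the affine invariance of $I_f$, obtained by acting with the inverse element $(-P_1^{-1}l_1,P_1^{-1})$ and reading off the second parameter from the semidirect product law, with the same-scale case and the swapped form following by specialization and one further translation. No difference in substance.
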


Let $\Sigma:=PP^\top$ so that $P=\Sigma^{\frac{1}{2}}$.
The mapping $P\leftrightarrow\Sigma^{\frac{1}{2}}$ is a diffeomorphism on the open cone $\Sym_{++}(d)$ of positive-definite matrices.

Thus we have
$$
I_f(p_{\mu_1,\Sigma}:p_{\mu_2,\Sigma})=I_f \left(p_{0,I}:p_{\Sigma^{-\frac{1}{2}}(\mu_2-\mu_1)}\right)
 = I_f\left(p_{\Sigma^{-\frac{1}{2}}(\mu_1-\mu_2)}:p_{0,I}\right).
$$

Note that in general, we have $\mu\not=E_{X\sim p_{\mu,\Sigma}}[X]$ and $\Sigma\not=\Cov_{X\sim p_{l,P}}[X]$.
However, for the special case of multivariate normal family, we have both  $\mu=E_{X\sim p_{\mu,\Sigma}}[X]$ and $\Sigma=\Cov_{X\sim p_{l,P}}[X]$.

\section{The $f$-divergences between densities of a multivariate location family}\label{sec:fdivmls2}
Let us define the squared Mahalanobis distance~\cite{Mahalanobis-1936} between two MVNs
 $N(\mu_1,\Sigma)$ and $N(\mu_2,\Sigma)$ as follows:
$$
\Delta_\Sigma^2(\mu_1,\mu_2) := (\mu_2-\mu_1)^\top\, \Sigma^{-1} (\mu_2-\mu_1).
$$
Since the covariance matrix $\Sigma$ is positive-definite, we have $\Delta_\Sigma^2(\mu_1,\mu_2)\geq 0$ and zero if and only if $\mu_1=\mu_2$.
The squared Mahalanobis distance generalizes the squared Euclidean distance when $\Sigma=I$: 
$\Delta_I^2(\mu_1,\mu_2)=\|\mu_1-\mu_2\|^2$.
 
The Kullback-Leibler divergence~\cite{CT-1999} between two multivariate isotropic normal distributions corresponds to half 
the squared Mahalanobis distance:
$$
D_\KL(p_{\mu_1,\Sigma}:p_{\mu_2,\Sigma})=\int_{x\in\bbR^d} p_{\mu_1,\Sigma}(x)\log\frac{p_{\mu_1,\Sigma}(x)}{p_{\mu_2,\Sigma}(x)} \dx= \frac{1}{2} \Delta_\Sigma^2(\mu_1,\mu_2),
$$
where 
$$
D_\KL(p:q)=\int p(x)\log\left(\frac{p(x)}{q(x)}\right) \dmu(x)=I_{f_\KL}(p:q)
$$
for $f_\KL(u)=-\log u$.

Moreover, the PDD of a MVN with covariance matrix $\Sigma$ and mean $\mu$ can also be written using the squared Mahalanobis distance as follows:
\begin{eqnarray*}
p(x;\mu,\Sigma)= \frac{1}{\sqrt{\det(2\pi \Sigma)}}\, \exp\left(-\frac{1}{2}\Delta_\Sigma^2(x,\mu)\right).
\end{eqnarray*}
This  rewriting highlights the general duality between Bregman divergences (e.g., the squared Mahalanobis distance is a Bregman divergence) and the exponential families~\cite{banerjee2005clustering} (e.g., multivariate Gaussian distributions).

We shall make the following set of assumptions for the standard density  $p(x)$ and $f(u)$:

\begin{assumption}\label{ass:regularity}
(i) We assume that there exists a function $p : [0, \infty) \to (0, \infty)$ such that $p$ is in $C^1$ class, $p^{\prime}(x) < 0, x \geq 0$, 
and furthermore $p(x) = \widetilde p(\|x\|^2), \ \ x \in \mathbb{R}^d$.\\
(ii) We assume that $f : (0,\infty) \to \mathbb R$ satisfies that it is in $C^2$ class, $f(1) = 0$ and $f^{\prime\prime} (x) > 0, x > 0$.\\
(iii) For every $t \in \mathbb{R}^d$, 
$$
\int_{\mathbb R^d} \left \lvert f\left(\frac{\widetilde p(\|x+t\|^2)}{\widetilde p(\|x\|^2)}\right)\right\rvert \, \widetilde p(\|x\|^2)\, dx < +\infty.  
$$
(iv) For every compact subset $K$ of $\mathbb{R}^d$,
$$
 \int_{\mathbb R^d} \sup_{t \in K} \left\rvert f^{\prime} \left( \frac{\widetilde p(\|y\|^2)}{\widetilde p(\|y + t \|^2)} \right) \right\lvert \left\lvert \widetilde p^{\, \prime}(\|y\|^2) \right\rvert \|y\| \dy < +\infty.
$$
\end{assumption}

Notice that using the assumption that the standard density $p(x)=\tilde{p}(\|x^2\|)$, we get
\begin{eqnarray*}
p_{\mu, \Sigma}(x) &=& \frac{1}{\sqrt{\det(\Sigma)}}  \,  p\left(\Sigma^{-1/2}(x-\mu)\right),  \\
&=& \frac{1}{\sqrt{\det(\Sigma)}}  \, \tilde{p}(\Delta_\Sigma^2(x,\mu)),
\end{eqnarray*}
since $\lvert \Sigma^{-\frac{1}{2}} (x-\mu)\rvert^2 = (\Sigma^{-\frac{1}{2}} (x-\mu))^\top  \Sigma^{-\frac{1}{2}} (x-\mu) =\Delta_\Sigma^2(x,\mu)$.
Thus the standard density $p(x)$ of the multivariate location-scale family $\mathcal{P}$ is the density of a spherical distribution~\cite{steerneman2005spherical} and $p_{\mu, \Sigma}(x)$ is the density of  an elliptical symmetric distribution~\cite{ollila2012complex}.

We let 
$$
I_f \left(p_{\mu_1, \Sigma} : p_{\mu_2, \Sigma} \right) := \int_{\mathbb R^d} f\left(\frac{p_{\mu_2, \Sigma}(x)}{p_{\mu_1, \Sigma}(x)}\right) p_{\mu_1, \Sigma}(x) \dx.
$$
This is well-defined due to Assumption \ref{ass:regularity} (iii). 

We state the main theorem generalizing~\cite{ali1966general}:

\begin{theorem}[$f$-divergence between location families]\label{prop:mc}
Under Assumption \ref{ass:regularity}, there exists a strictly increasing and differentiable function $h_f$ such that 
\begin{equation}\label{eq:mc}
 I_f \left(p_{\mu_1, \Sigma} : p_{\mu_2, \Sigma} \right)= h_f \left( \Delta_\Sigma^2(\mu_1,\mu_2) \right), \ \ \mu_1, \mu_2 \in \mathbb{R}^d. 
\end{equation}
\end{theorem}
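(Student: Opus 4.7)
The plan is to reduce the claim to a one-variable statement using the affine invariance of Proposition~\ref{prop:fdivls}, to identify that variable with $\Delta_\Sigma$ using the spherical symmetry of the standard density, and then to verify that the resulting scalar function is differentiable and strictly increasing via differentiation under the integral sign combined with a short reflection argument.

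\medskip

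\noindent\emph{Reduction to one variable.} Proposition~\ref{prop:fdivls} applied with $P_1 = P_2 = \Sigma^{1/2}$ gives
$$
I_f(p_{\mu_1,\Sigma} : p_{\mu_2,\Sigma}) = I_f(p_{0,I} : p_{t,I}), \qquad t := \Sigma^{-1/2}(\mu_2 - \mu_1),
$$
with $\|t\|^2 = \Delta_\Sigma^2(\mu_1,\mu_2)$. Because $p(x) = \widetilde p(\|x\|^2)$ by Assumption~\ref{ass:regularity}(i), the change of variable $x \mapsto Ox$ for any orthogonal $O$ leaves the integrand of $I_f(p_{0,I}:p_{t,I})$ unchanged; choosing $O$ with $Ot = \|t\| e_1$ shows that this $f$-divergence depends on $t$ only through $\|t\|$ and equals $\phi(\|t\|)$, where
$$
\phi(r) := \int_{\bbR^d} f\!\left(\frac{\widetilde p(\|x - r e_1\|^2)}{\widetilde p(\|x\|^2)}\right) \widetilde p(\|x\|^2)\, \dx, \qquad r \ge 0.
$$
Setting $h_f(s) := \phi(\sqrt{s})$, the theorem reduces to showing that $\phi$ is differentiable and strictly increasing on $[0,\infty)$, with $\phi'(r)/r$ admitting a finite positive limit as $r \downarrow 0$ (needed for differentiability of $h_f$ at the origin).

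\medskip

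\noindent\emph{Differentiation and strict monotonicity.} Well-definedness of $\phi(r)$ is Assumption~\ref{ass:regularity}(iii). Differentiating under the integral sign, justified on any compact $r$-set by the $L^1$-dominant supplied by Assumption~\ref{ass:regularity}(iv) after the change of variable $y = x - r e_1$, yields
$$
\phi'(r) = -2 \int_{\bbR^d} y_1\, f'\!\left(\frac{\widetilde p(\|y\|^2)}{\widetilde p(\|y + r e_1\|^2)}\right) \widetilde p^{\,\prime}(\|y\|^2)\, \dy.
$$
For $r > 0$ I would split the $y_1$-integral over the half-spaces $\{y_1 > 0\}$ and $\{y_1 < 0\}$ and reflect via $y_1 \mapsto -y_1$ on the second half, combining them into a single integral over $\{y_1 > 0\} \subset \bbR^d$ whose integrand is
$$
-2 y_1\, \widetilde p^{\,\prime}(\|y\|^2)\left[f'\!\left(\frac{\widetilde p(\|y\|^2)}{\widetilde p((y_1 + r)^2 + \|y'\|^2)}\right) - f'\!\left(\frac{\widetilde p(\|y\|^2)}{\widetilde p((y_1 - r)^2 + \|y'\|^2)}\right)\right].
$$
The prefactor $-2 y_1 \widetilde p^{\,\prime}$ is positive by Assumption~\ref{ass:regularity}(i); the bracket is positive because $(y_1 + r)^2 > (y_1 - r)^2$ for $y_1, r > 0$ forces $\widetilde p((y_1 + r)^2 + \|y'\|^2) < \widetilde p((y_1 - r)^2 + \|y'\|^2)$ (again (i)), which makes the first ratio strictly larger, and $f'$ is strictly increasing by Assumption~\ref{ass:regularity}(ii). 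Hence $\phi'(r) > 0$ for $r > 0$.

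\medskip

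\noindent\emph{Behavior at the origin.} The substitution $r \mapsto -r$ together with $x_1 \mapsto -x_1$ fixes the integrand defining $\phi$, so $\phi$ is even and $\phi'(0) = 0$ by odd symmetry in $y_1$ of the integrand for $\phi'(0)$. A first-order Taylor expansion of $f'(\widetilde p(\|y\|^2)/\widetilde p(\|y + r e_1\|^2))$ around $r = 0$, passed under the integral by dominated convergence, produces a leading term in $\phi'(r)$ linear in $r$ with positive coefficient proportional to $f''(1)\int_{\bbR^d} y_1^2 (\widetilde p^{\,\prime}(\|y\|^2))^2/\widetilde p(\|y\|^2)\, \dy$; hence $\phi'(r)/r$ has a finite positive limit as $r \downarrow 0$, and $h_f$ is differentiable at $0$ with positive derivative. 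I expect the main obstacle to be the repeated justification of differentiation and limit interchange under the integral against Assumption~\ref{ass:regularity}(iii)--(iv); once those interchanges are in place, the reflection argument for strict monotonicity is direct.
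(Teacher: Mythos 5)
Your proof is correct and follows essentially the same route as the paper's: reduce by affine and orthogonal invariance to a radial function $\phi(r)=F(re_1)$, differentiate under the integral sign using Assumption~\ref{ass:regularity}(iv), and obtain $\phi'(r)>0$ for $r>0$ by the reflection $y_1\mapsto -y_1$ together with the strict monotonicity of $\widetilde p$ and of $f'$ (the paper runs this same computation slice-by-slice in $x_1$ via a one-dimensional mean-value-theorem argument, which is equivalent to your half-space splitting). The only genuine addition is your attention to differentiability of $h_f(s)=\phi(\sqrt{s})$ at $s=0$, which indeed requires $\phi'(r)/r$ to have a finite limit as $r\downarrow 0$; the paper's proof silently passes over this point, so your Taylor-expansion argument there is a useful (if still sketchy, since dominating the second-order remainder needs integrability beyond what Assumption~\ref{ass:regularity} explicitly states) patch rather than a different approach.
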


\begin{proof}

\begin{description}
\item [\underline{Step 1}.] We let $d=1$ and $\Sigma = 1$. 
\[ I_f \left(p_{\mu_1, 1} : p_{\mu_2, 1} \right) = I_f \left(p_{0, \Sigma} : p_{\mu_2 - \mu_1, 1} \right)  = \int_{\mathbb R} f\left(\frac{\widetilde p((x-(\mu_2 - \mu_1))^2)}{\widetilde p(x^2)}\right) \widetilde p(x^2) dx. \]
Let 
\[ F(t) := I_f \left(p_{0, 1} : p_{t, 1} \right) = \int_{\mathbb R} f\left(\frac{\widetilde p((x-t)^2)}{\widetilde p(x^2)}\right) \widetilde p(x^2) dx, \ \ t \in \mathbb{R}.  \]
By Assumption \ref{ass:regularity} (i), $F(t) = F(-t), \ \ t \in \mathbb{R}$. 
Hence if we let $h_f (s) := F(\sqrt{s})$, then $h_f$ satisfies Eq.~(\ref{eq:mc}). 

Now it suffices to show that $F$ is strictly increasing and differentiable. 
By Assumption \ref{ass:regularity} (iv), 
\begin{align*} 
F^{\prime}(t) &=  \int_{\mathbb R} \frac{d}{\mathrm{d}t} \left(f\left(\frac{\widetilde p((x-t)^2)}{\widetilde p(x^2)}\right) \right) \widetilde p(x^2) dx \\
&= 2  \int_{\mathbb R} f^{\prime}\left(\frac{\widetilde p((x-t)^2)}{\widetilde p(x^2)}\right) (t-x) \widetilde p^{\, \prime}((x-t)^2) dx\\
&= -2  \int_{\mathbb R} f^{\prime}\left(\frac{\widetilde p(y^2)}{\widetilde p((y+t)^2)}\right) y \widetilde p^{\, \prime}(y^2) dy\\
&= -2  \int_{0}^{\infty} \left(f^{\prime}\left(\frac{\widetilde p(y^2)}{\widetilde p((y+t)^2)}\right) - f^{\prime}\left(\frac{\widetilde p(y^2)}{\widetilde p((y-t)^2)}\right)  \right) y \widetilde p^{\, \prime}(y^2) dy. 
\end{align*}

By the mean-value theorem and Assumption \ref{ass:regularity} (i) and (ii), 
$$
f^{\prime}\left(\frac{\widetilde p(y^2)}{\widetilde p((y+t)^2)}\right) - f^{\prime}\left(\frac{\widetilde p(y^2)}{\widetilde p((y-t)^2)} \right) > 0,\quad \widetilde p^{\, \prime}(y^2) < 0, 
$$
for every $y, t > 0$. 
Hence $F^{\prime}(t) > 0$ for every $t > 0$. 

\item[\underline{Step 2}.]
 We let $\Sigma = I$. 
\[ I_f \left(p_{\mu_1, I_d} : p_{\mu_2, I_d} \right) = I_f \left(p_{0, \Sigma} : p_{\mu_2 - \mu_1, I_d} \right)  = \int_{\mathbb R^d} f\left(\frac{\widetilde p(\|x-(\mu_2 - \mu_1)\|^2)}{\widetilde p(\|x\|^2)}\right) \widetilde p(\|x\|^2) dx. \]
Let 
\begin{equation}\label{eq:def-F} 
F(t) := I_f \left(p_{0, 1} : p_{t, I_d} \right) = \int_{\mathbb R^d} f\left(\frac{\widetilde p(\|x-t\|^2)}{\widetilde p(\|x\|^2)}\right) \widetilde p(\|x\|^2) dx, \ \ t \in \mathbb{R}^d.  
\end{equation}
By changing the variable $x$ by an orthogonal matrix, 
$F(s) = F(t)$ if $\|s\| = \|t\|$.  
Hence we can assume that $t = (t,0, \dots, 0)^\top \in \mathbb{R}^d, t > 0$. 
For $x_2, \dots, x_d \in \mathbb R$, let 
\[ F_{x_2, \dots, x_d}(t) := \int_{\mathbb R} f\left(\frac{\widetilde p((x_1 - t)^2 + x_2^2 + \cdots + x_d^2)}{\widetilde p(\|x\|^2)}\right) \widetilde p(\|x\|^2) dx_1, \ \ t \in \mathbb{R}.\]
Then, we can show that $F_{x_2, \dots, x_d}^{\prime}(t) > 0$ for every $t > 0$ and every $x_2, \dots, x_d \in \mathbb R$, in the same manner as in Step 1. 
By this and Assumption \ref{ass:regularity} (iv), 
\[ F^{\prime}(t) = \int_{\mathbb{R}^{d-1}} F_{x_2, \dots, x_d}^{\prime}(t) dx_2 \dots dx_d > 0, \ \ t > 0. \]
Hence if we let $h_f (s) := F(\sqrt{s})$, then $h_f$ satisfies Eq.~(\ref{eq:mc}). 

\item [\underline{Step 3}.] Finally, we consider the general case.  
Let $\mu := \Sigma^{-1/2} (\mu_2 - \mu_1)$.  
Then, 
\begin{equation*}
I_f \left(p_{\mu_1, \Sigma} : p_{\mu_2, \Sigma} \right) = \sqrt{\det \Sigma}\, I_f \left(p_{0, I_d} : p_{\mu, I_d} \right). 
\end{equation*}
Hence this case is attributed to Step 2. 
\end{description}

\end{proof}

We shall now illustrate this theorem with several examples.

\subsection{The normal location families}

\subsubsection{The scalar functions $h_f$ for some common $f$-divergences}

Table~\ref{tab:MahIsoMVN} lists some examples of $f$-divergences with their corresponding monotone increasing functions $h_f$.
We consider the following $f$-divergences between two PDFs of MVNs with same covariance matrix:

\begin{itemize}
\item For the $\chi^2$ divergence with $f_{\chi,2}(u)=(u-1)^2$, we have $h_{\chi^2}(u)=h_{\chi,2}(u)=1-\exp\left(-\frac{1}{2}u\right)$, 
and more generally for the order-$k$ chi divergences ($f$-divergence  generator $f_{\chi,k}(u)=(u-1)^k$) between $p_{\mu_1,\Sigma}$ and $p_{\mu_2,\Sigma}$, we get~\cite{nielsen2013chi}:
$$
h_{\chi,k}(u)= \sum_{i=0}^k (-1)^{k-i} \binom{k}{i} \exp\left(\frac{1}{2}i(i-1) \, \Delta_\Sigma^2(\mu_1,\mu_2)\right).
$$
We observe that the order-$k$ $\chi$-divergence between isotropic MVNs diverges as $k$ increases.
It is easy to check that we can compute $h_f$ in closed form for any convex polynomial $f$-divergence generator $f(u)$.  

\item For the Kullback-Leibler divergence, we have $D_\KL[p_{\mu_1,\Sigma}:p_{\mu_2,\Sigma}]=\frac{1}{2}\Delta_\Sigma^2(\mu_1,\mu_2)$ so $h_\KL(u)=\frac{1}{2}u$. 
Notice that because $f$-divergences between isotropic MVNs are symmetric, the Chernoff information coincides with the Bhattacharyya distance.

\item The total variation divergence (a metric $f$-divergence obtained for $f_\TV(u)=\lvert u-1\rvert$ which is always upper bounded by $1$) between two multivariate Gaussians 
$p_{\mu_1,\Sigma}$ and $p_{\mu_2,\Sigma}$ with the same covariance matrix is reported indirectly in~\cite{rohban2013impossibility}: 
The probability of error $P_e$ (with $P_e\leq\frac{1}{2}$) in Bayesian binary hypothesis with equal prior is
$P_e(p_1,p_2)=\frac{1}{2}(1-D_\TV[p_1,p_2])=Q\left(\frac{1}{2} \|\Sigma^{-\frac{1}{2}}(\mu_2-\mu_1)\|\right)$ (Eq.~(2) in~\cite{rohban2013impossibility}) where $Q(x)=1-Q(-x)=1-\Phi(x)$ where $\Phi(x)$ denotes the cumulative distribution function of the standard normal distribution.
So we get the function $h_\TV$ as a definite integral of a function of a squared Mahalanobis distance:
\begin{equation*}
D_\TV[p_1,p_2]=1-2\, P_e(p_1,p_2)=1-2\,Q\left(\frac{1}{2}\sqrt{\Delta_\Sigma^2(\mu_1,\mu_2)}\right).
\end{equation*}

\end{itemize}

\renewcommand{\arraystretch}{1.5}
\begin{table} 
\centering
\begin{tabular}{ll}
$f$-divergence & $f(u)$ and $h_f(u)$\\ \hline
$\chi$-squared divergence & $(u-1)^2$  and $1-\exp\left(-\frac{1}{2}u\right)$\\
Order-$k$ $\chi$ divergence & $(u-1)^k$ and $\sum_{i=0}^k (-1)^{k-i} \binom{k}{i} \exp\left(\frac{1}{2}i(i-1)u\right)$ \\
Kullback-Leibler divergence & $-\log(u)$  and $\frac{1}{2}u$\\
squared Hellinger divergence & $(\sqrt{u}-1)^2$ and $1-\exp\left(-\frac{1}{8}u\right)$ \\
Amari's $\alpha$-divergence & 
 $\frac{4}{1-\alpha^2} \left(1-u^{\frac{1+\alpha}{2}}\right)$ 
and $\frac{4}{1-\alpha^2}\left(1-\exp(-\frac{1-\alpha^2}{8}u)\right)$\\
Jensen-Shannon divergence & $u\log u-(1+u)\log\frac{1+u}{2}$ and $\frac{1}{4}{u}-I_\JS(u)$\\ 
Total variation distance &  $\lvert u-1\rvert$ and $1-2 Q(\frac{1}{2}\sqrt{u}):=1-2\int_{\frac{1}{2}\sqrt{u}}^{+\infty} \frac{1}{\sqrt{2\pi}}\exp(-\frac{1}{2}t^2) \dt$ \\ \hline
\end{tabular}
\caption{The $f$-divergences between two normal distributions with identical covariance matrix can always be expressed as an increasing function of the squared Mahalanobis distance: 
$I_f(p_{\mu_1,\Sigma}:p_{\mu_2,\Sigma})=h_f(\Delta_\Sigma^2(\mu_1,\mu_2))$.}
\label{tab:MahIsoMVN}
\end{table}
 
\begin{remark}
Notice that the Fisher-Rao distance between two multivariate normal distributions with the same covariance matrix is also a monotonic increasing function of their Mahalanobis distance~\cite{MVNGeodesic-1991}:
\begin{equation*}
\rho(p_{\mu_1,\Sigma},p_{\mu_2,\Sigma})=\sqrt{2}\, \arccosh \left( 1+ \frac{\Delta_\Sigma^2(\mu_1,\mu_2)}{4}\right),
\end{equation*}
where
$\arccosh(x)=\log(x+\sqrt{x^2-1})$ for $x\geq 1$.
That is, we have $h_\rho(u)=\sqrt{2}\, \arccosh \left( 1+ \frac{u}{4}\right)$.
\end{remark}

\subsubsection{The special case of the Jensen-Shannon divergence}\label{sec:JSD}
The Jensen-Shannon divergence~\cite{Lin-1991} is a symmetrization of the Kullback-Leibler divergence:
$$
D_\JS[p,q]=\frac{1}{2}\left(D_\KL\left[p:\frac{p+q}{2}\right] + D_\KL\left[q:\frac{p+q}{2}\right] \right).
$$
The JSD is a $f$-divergence for the generator $f_\JS(u)=u\log u-(1+u)\log\frac{1+u}{2}$, is always upper bounded by $\log 2$, and can further be embedded into a Hilbert space~\cite{JSD-2004}.
The JSD can be interpreted in information theory as the transmission rate in a discrete memoryless channel~\cite{JSD-2004}.
 
Although we do not have a closed-form formula for the  Jensen-Shannon divergence $D_\JS[p_{\mu_1,\Sigma}:p_{\mu_2,\Sigma}]$,
knowing that $D_\JS[p_{\mu_1,\Sigma}:p_{\mu_2,\Sigma}]=h_{f_\JS}(\Delta^2_\Sigma(\mu_1,\mu_2))$, allows one to compare exactly the JSDs since $f_\JS$ is a strictly increasing function.
That is, we have the equivalence of following signs of the predicates:
$$
D_\JS[p_{\mu_1,\Sigma}:p_{\mu_2,\Sigma}] > D_\JS[p_{\mu_3,\Sigma}:p_{\mu_4,\Sigma}] \Leftrightarrow \Delta^2_\Sigma(\mu_1,\mu_2)>\Delta^2_\Sigma(\mu_3,\mu_4).
$$

In~\cite{Entropy-2GMM-2008}, 
a formula for the differential entropy of the Gaussian mixture $m(x;\mu,\sigma)=\frac{1}{2}p_{-\mu,\sigma}(x)+\frac{1}{2}p_{-\mu,\sigma}(x)$ is reported using a definite integral which we translate using the squared Mahalanobis distance as follows:
$$
h(m(x;\mu,\sigma))=\frac{1}{2}\log(2\pi e\sigma^2)+\frac{1}{4}{\Delta_{\sigma^2}(-\mu,\mu)}-I_\JS(\Delta_{\sigma^2}(-\mu,\mu)),
$$
where
$$
I_\JS(\Delta^2):=\sqrt{\frac{8}{\pi\Delta^2}} \, \exp\left(-\frac{\Delta^2}{8}\right)\, \int_0^{\infty} e^{-\frac{2x}{\Delta^2}}\, \cosh(x)\,\log\cosh(x) \dx.
$$
We have $I_\JS(0)=0$ and the function $I_\JS$ can be tabulated as in~\cite{Entropy-2GMM-2008}.

Since the Jensen-Shannon divergence between two distributions amounts to the differential entropy of the mixture minus the average of the mixture entropies, we get 
$$
D_\JS[p_{\mu_1,\Sigma},p_{\mu_2,\Sigma}] = D_\JS[p_{0,1},p_{\Delta^2_\Sigma(\mu_1,\mu_2),1}]
$$
\begin{eqnarray*}
&=& \frac{1}{2}\log\det(2\pi e\Sigma) +\frac{1}{4}{\Delta_{\Sigma^2}(\mu_1,\mu_2)}-I_\JS(\Delta_{\Sigma^2}(\mu_1,\mu_2))- \frac{1}{2} \log \det(2\pi e\Sigma)\nonumber\\ 
&=& \frac{1}{4}{\Delta_{\Sigma^2}(\mu_1,\mu_2)} -I_\JS(\Delta_{\Sigma^2}(\mu_1,\mu_2)), 
\end{eqnarray*}
since $h[p_{\mu_1,\Sigma}]=h[p_{\mu_2,\Sigma}]=\frac{1}{2}\log \det( 2\pi e\Sigma) =\frac{d}{2}\log(2\pi e)  +\frac{1}{2}\log \vert\Sigma\rvert$.

In Section~\ref{sec:mvuv}, we graph the functions $h_f$ for the total variation distance and the Jensen-Shannon divergence.

\subsection{Cauchy location family}

Notice that for $d=1$, since the $\chi^2$-divergence (a $f$-divergence for $f(u)=(u-1)^2$) between
two Cauchy location densities $p_{l_1,s}$ and $p_{l_2,s}$ with prescribed scale $s$ is~\cite{nielsen2021f}:
$$
D_{\chi^2}(p_{l_1,s}:p_{l_2,s})=\frac{(l_2-l_1)^2}{2s^2}:=\chi_s(l_1,l_2),
$$
we have
$$
D_{\chi^2}(p_{l_1,s}:p_{l_2,s})=\frac{1}{2}\Delta^2_{s^2}(l_1,l_2),
$$
where $\Delta^2_{s^2}(l_1,l_2)=\frac{(l_2-l_1)^2}{s^2}$.
Thus it follows that $h_{f_\chi}(u)=\frac{1}{2}u^2$.

Now, since any $f$-divergence between any two Cauchy location densities $p_{l_1,s}$ and $p_{l_2,s}$ is a scalar function of the
$\chi^2$-divergence~\cite{nielsen2021f}: 
$$
I_f(p_{l_1,s}:p_{l_2,s})=g_f(\chi_s(l_1,l_2)),
$$
we have
$$
I_f(p_{l_1,s}:p_{l_2,s})=g_f\left(\frac{1}{2}\Delta^2_{s^2}\right).
$$
Therefore it follows that
$$
h_f(u)=g_f \left(\frac{1}{2}u^2\right).
$$
See Table~1 of~\cite{nielsen2021f} for several examples of scalar functions corresponding to $f$-divergences.

Now we let $d \ge 2$. 
Contrary to the normal case, it is in general difficult to obtain explicit expressions for $h_f$ in the Cauchy case. 
Here, we give one example to illustrate that difficulty. 
Let $f(u) := (u-1)^2$ (the corresponding divergence is the $\chi^2$ divergence), $d = 3$, and  $F$ as Eq.~(\ref{eq:def-F}). 
Then, 
\[ F(t) = \int_{\mathbb R^3} \frac{p(\|x-t\|^2)^2}{p(\|x\|^2)} dx - 1, \ \ t \in \mathbb{R}^3. \]
By calculations, we find that 
\begin{eqnarray*}
\lefteqn{\int_{\mathbb R} \frac{(1+x_1^2 + x_2^2 + x_3^2)^2}{(1+ (x_1-t)^2 + x_2^2 + x_3^2)^4} \dx_1 =}\\
&& \frac{\pi}{16(1+ x_2^2 + x_3^2)^{3/2}} \left( 8 + \frac{16 t^2}{1+x_2^2+x_3^2} + \frac{5t^4}{(1+x_2^2+x_3^2)^2} \right).
\end{eqnarray*}
Therefore, 
\begin{eqnarray*}
F(t) &=& \frac{1}{\pi} \int_{\mathbb R^2} \frac{\pi}{16(1+ x_2^2 + x_3^2)^{3/2}} \left( 8 + \frac{16 t^2}{1+x_2^2+x_3^2} + \frac{5t^4}{(1+x_2^2+x_3^2)^2} \right) \dx_2 \dx_3 - 1\\
& =& \frac{2}{3} t^2 + \frac{t^4}{8}. 
\end{eqnarray*}
Hence, 
$$
h_f (s) =  \frac{2}{3} s + \frac{s^2}{8}, \ s \ge 0.
$$
Contrary to the normal case (see Table~\ref{tab:MahIsoMVN}), $h_f$ for the Cauchy family is a polynomial. 
This holds also true for the case that $d=5,7, \dots$.

\subsection{Multivariate $f$-divergences as equivalent univariate $f$-divergences}\label{sec:mvuv}

Ali and Silvey~\cite{ali1966general} further showed how to replace a $d$-dimensional $f$-divergence by an equivalent $1$-dimensional $f$-divergence for the fixed covariance matrix normal distributions:

\begin{proposition}[\cite{ali1966general}, Section~6]\label{prop:fdivD1}
Let $\Delta_\Sigma(\mu_1,\mu_2)$ denote the Mahalanobis distance.
Then we have
$$
I_f(p_{\mu_1,\Sigma},p_{\mu_2,\Sigma}) = I_f \left(p_{0,1},p_{\Delta_\Sigma(\mu_1,\mu_2),1}\right).
$$
\end{proposition}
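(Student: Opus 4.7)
The plan is to establish the identity by two successive symmetry reductions followed by a product-integral factorization. First, I would apply Proposition~\ref{prop:fdivls} with $(l_1,P_1)=(\mu_1,\Sigma^{1/2})$ and $(l_2,P_2)=(\mu_2,\Sigma^{1/2})$ to obtain
$$I_f(p_{\mu_1,\Sigma}:p_{\mu_2,\Sigma}) \;=\; I_f(p_{0,I_d}:p_{m,I_d}),$$
where $m := \Sigma^{-1/2}(\mu_2-\mu_1)$. A direct computation gives $\|m\|^2 = (\mu_2-\mu_1)^\top \Sigma^{-1}(\mu_2-\mu_1) = \Delta_\Sigma^2(\mu_1,\mu_2)$, so $\|m\|$ is exactly the Mahalanobis distance $\Delta_\Sigma(\mu_1,\mu_2)$.

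Second, since the standard Gaussian density $p_{0,I_d}(x)=(2\pi)^{-d/2}\exp(-\|x\|^2/2)$ is rotationally symmetric, I would change variables $x \mapsto O x$ using any orthogonal matrix $O$ that sends $m/\|m\|$ to the first coordinate axis $e_1$ (the case $m=0$ being trivial). The Jacobian is unity and the translated density transforms compatibly, yielding $I_f(p_{0,I_d}:p_{m,I_d}) = I_f(p_{0,I_d}:p_{c e_1,I_d})$ with $c := \|m\| = \Delta_\Sigma(\mu_1,\mu_2)$.

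Third, using the product factorization $p_{0,I_d}(x) = \prod_{i=1}^d \phi(x_i)$, where $\phi(t) = (2\pi)^{-1/2}\exp(-t^2/2)$, the likelihood ratio $p_{c e_1,I_d}(x)/p_{0,I_d}(x) = \phi(x_1-c)/\phi(x_1)$ depends on $x_1$ alone. Consequently Fubini's theorem splits the defining integral as
$$\int_{\bbR} f\!\left(\frac{\phi(x_1-c)}{\phi(x_1)}\right)\phi(x_1)\,\dx_1 \;\cdot\; \prod_{i=2}^{d} \int_{\bbR} \phi(x_i)\,\mathrm{d}x_i,$$
and the trailing product equals $1$, leaving precisely the univariate $f$-divergence $I_f(p_{0,1}:p_{c,1})$.

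I do not anticipate a genuine obstacle: the argument is built out of invariance moves (affine, orthogonal) and a product factorization specific to the Gaussian. The only routine care needed is the Fubini split, whose absolute integrability is secured by the standing assumption that $I_f$ is well defined for the densities at hand (cf.\ Assumption~\ref{ass:regularity}(iii) specialized to the normal standard density). Note that this proposition is in fact sharper than the monotonicity statement of Theorem~\ref{prop:mc}: rather than asserting only that the $f$-divergence is some increasing function of $\Delta_\Sigma^2$, it identifies that function explicitly as a $1$-dimensional $f$-divergence between two univariate Gaussians.
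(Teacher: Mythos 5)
Your proof is correct and is essentially the paper's argument: the paper's single change of variable $y=\frac{1}{\Delta_\Sigma(\mu_1,\mu_2)}(x-\mu_1)^\top\Sigma^{-1}(\mu_2-\mu_1)$ is exactly the composition of your three steps (affine reduction, rotation of $m$ onto $e_1$, and projection onto the first coordinate), relying on the same key fact that the likelihood ratio depends only on that one-dimensional projection. Your version just spells out the Fubini factorization that the paper leaves implicit.
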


We can show this assertion  by the change of variable $y=\frac{1}{\Delta_\Sigma(\mu_1,\mu_2)} (x-\mu_1)^\top \Sigma^{-1}(\mu_2-\mu_1)$.
Notice that $\Delta_\Sigma(\mu_1,\mu_2)=\Delta_1(0,\Delta_\Sigma(\mu_1,\mu_2))$, and therefore we can write:
\begin{eqnarray*}
I_f(p_{\mu_1,\Sigma}:p_{\mu_2,\Sigma})&=&h_f(\Delta_\Sigma(\mu_1,\mu_2)),\\
&=&h_f(\Delta_1(0,\Delta_\Sigma(\mu_1,\mu_2))),\\
&=& I_f \left(p_{0,1}:p_{\Delta_\Sigma(\mu_1,\mu_2),1}\right).
\end{eqnarray*}

Property~\ref{prop:fdivD1} yields a computationally efficient method to calculate stochastically the $f$-divergences when not known in closed form (eg., the Jensen-Shannon divergence).
We can estimate the $f$-divergence using $s$ samples $x_1,\ldots, x_s$ independently and identically distributed from a propositional distribution $r(x)$ as:
$$
\hat{I}_f[p:q] = \frac{1}{s}\sum_{i=1}^s \frac{1}{r(x_i)} p(x_i) f\left(\frac{q(x_i)}{p(x_i)}\right).
$$
Using we take the propositional distribution $r(x)=p(x)$.
Estimating $f$-divergences between isotropic Gaussians requires $O(sd)$ time. 
Thus Proposition~\ref{prop:fdivD1} allows to shave a factor $d$.

Notice that when $h_f$ is not available in closed-form, We can tabulate the function using Monte Carlo stochastic estimations of $\hat{h}_f(\Delta^2)$.
Moreover, using symbolic regression software, we can fit a formula to the experimental tabulated data (see the plots in Figure~\ref{exp:JSD}):
For example, for the JSD with $\Delta^2\in [\frac{1}{2},5]$, we find that the function
$\tilde{h}_{f_\JS}(u)=\frac{2.06709 u}{u+8.27508}$ approximates well the underlying intractable function
$h_{f_\JS}(u)$ (relative mean error less than $0.1\%$).
This techniques proves useful specially for bounded $f$-divergences like the total variation distance or the Jensen-Shannon divergence.

\begin{figure}
\centering
\includegraphics[width=0.48\textwidth]{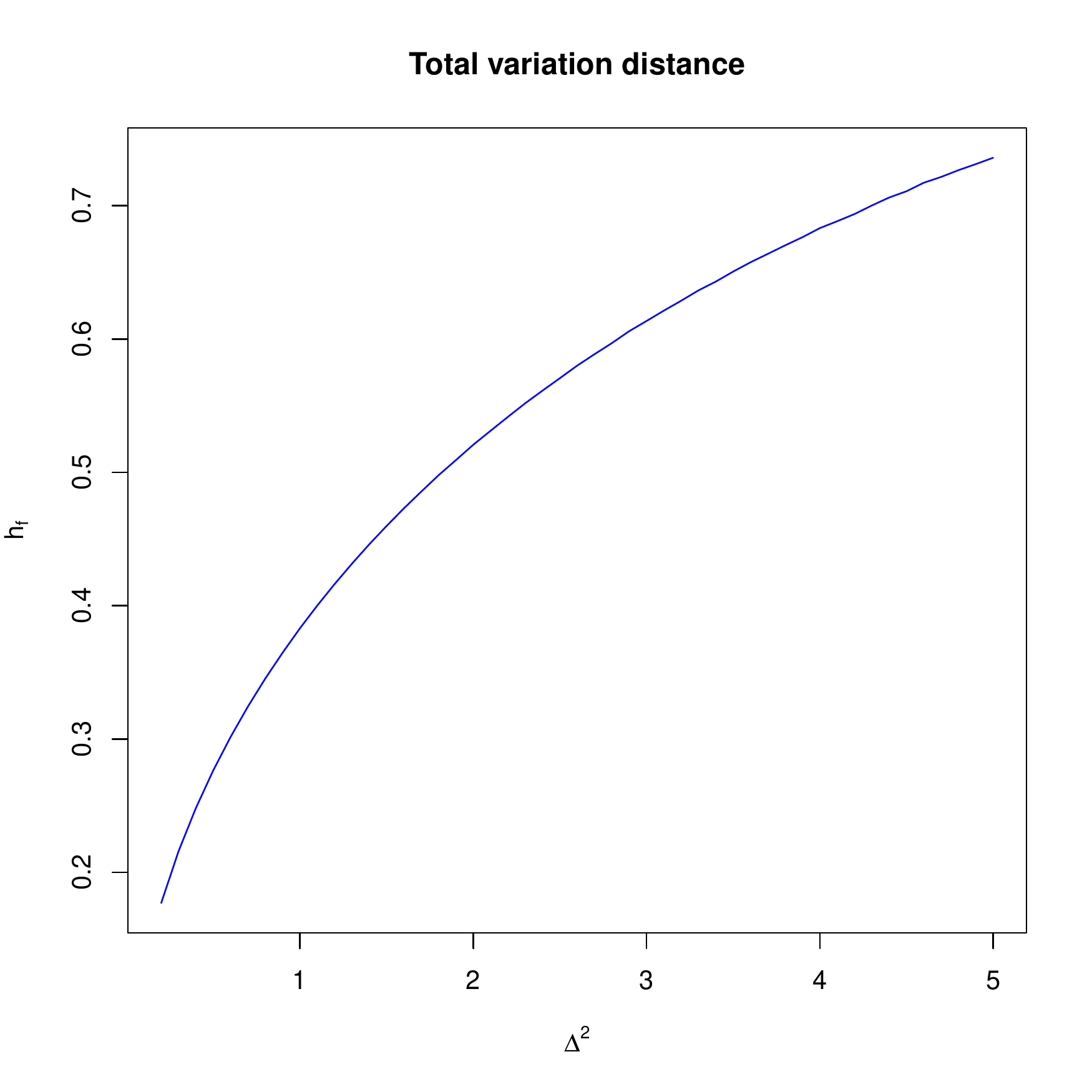}
\includegraphics[width=0.48\textwidth]{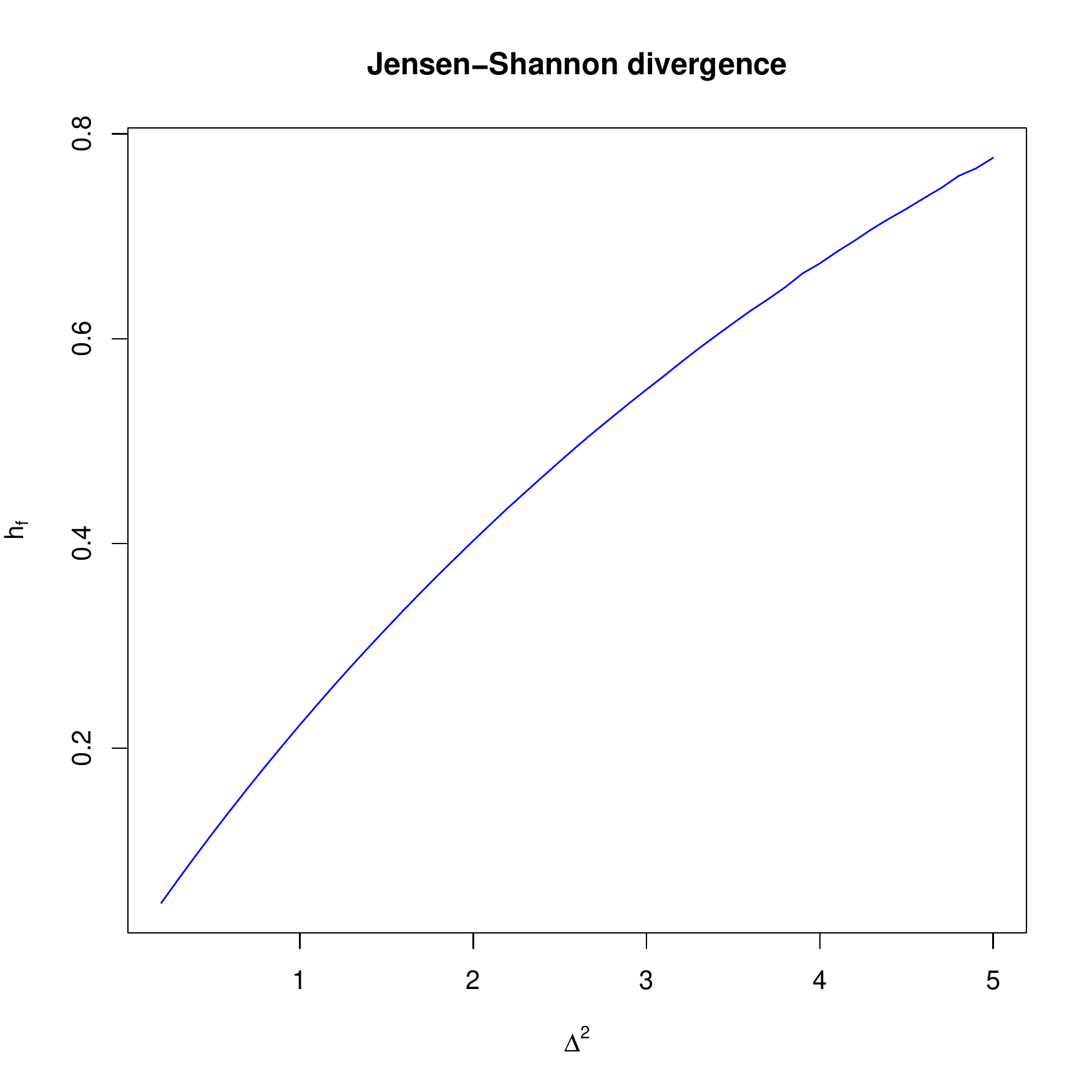}
\caption{Estimating the functions $h_f(\Delta^2)$ by Monte Carlo integration ($y$-axis is $\hat h_f(\Delta^2)$ for $s=10^8$ MC samples) for  the total variation distance (left) and the Jensen-Shannon divergence (right).}\label{exp:JSD}
\end{figure}

\section{$f$-divergences in multivariate scale families}\label{sec:ns}
Consider the scale family $N_\mu=\{p_{\mu,\Sigma}\ :\ \Sigma\succ 0\}$ of $d$-variate MVNs with a prescribed location $\mu\in\bbR^d$.
Ali and Silvey \cite{ali1966general} proved that all $f$-divergences between any two multivariate normal distributions with prescribed mean is an increasing
 function of $\lvert1-\lambda_i\rvert$'s, where the $\lambda_i$'s denote the eigenvalues of $\Sigma_2\Sigma_1^{-1}$.
This property can be proven by using the definition of $f$-divergences of Eq.~\ref{eq:fdiv} and the fact that
$$
\frac{p_{\mu,\Sigma_2\Sigma_1^{-1}}(x)}{p_{\mu,I}(x)}=\prod_{i=1}^d \frac{p_{0,\lambda_i(\Sigma_2\Sigma_1^{-1})}(x_i)}{p_{0,1}(x_i)}.
$$

Thus we have
$$
I_f(p_{\mu,\Sigma_1}:p_{\mu,\Sigma_2})=I_f\left(p_{\mu,I}:p_{\mu,\Sigma_2\Sigma_1^{-1}}\right)=E_f(\lvert 1-\lambda_1\rvert,\ldots,
\lvert 1-\lambda_d\rvert),
$$
where $E_f(\cdot)$ is a $d$-variate totally symmetric function (invariant to permutations of arguments).
Therefore the $f$-divergences  are spectral  matrix  divergences~\cite{kulis2009low}.
In particular, one interesting case is when $E_f(\cdot)$ is a separable function:
$$
E_f(\lvert 1-\lambda_1\rvert,\ldots,\lvert 1-\lambda_d\rvert)=\sum_{i=1}^d e_f(\lvert 1-\lambda_i\rvert).
$$
We shall illustrate these results with the Kullback-Leibler divergence and more generally with the $\alpha$-divergences~\cite{IG-2016} or $\alpha$-Bhattacharyya divergences~\cite{nielsen2011burbea} below:

\begin{itemize}

\item The Kullback-Leibler divergence: The well-known formula of the KLD between two same-mean MVNs is 
 
\begin{eqnarray*}
D_{\KL}(p_{\mu,\Sigma_1}:p_{\mu,\Sigma_2}) &=&\frac{1}{2} \left(\log \det(\Sigma_2\Sigma_1^{-1}) +\tr(\Sigma_2\Sigma_1^{-1}-I)\right).
\end{eqnarray*}
This expression
can be rewritten as $D_{\KL}(p_{\mu,\Sigma_1}:p_{\mu,\Sigma_2})=\sum_{i=1}^d e_{\KL}'(\lambda_i(\Sigma_2\Sigma_1^{-1}))$ where
$$
e_{\KL}'(v)=\frac{1}{2} \left(\log v+ v-1\right),
$$
since $\det(\Sigma_2\Sigma_1^{-1}) =\prod_{i=1}^d \lambda_i(\Sigma_2\Sigma_1^{-1})$ and
$\tr(\Sigma_2\Sigma_1^{-1}-I)=\sum_{i=1}^d  (\lambda_i(\Sigma_2\Sigma_1^{-1})-1)$.
By a change of variable $v=1-u$, we get
\begin{equation}
e_{f_\KL}(u)=\frac{1}{2} \left( \log (1-u)-u\right),
\end{equation}
and $E_{f_\KL}(u_1,\ldots,u_d)=\sum_{i=1}^d e_{f_\KL}(u_i)$ (separable case).
We check that the scalar function $e_{f_\KL}(u)$ is an increasing function of $u$.

\item More generally, let us consider the family of $\alpha$-divergences~\cite{IG-2016}:
$$
D_\alpha(p:q)=\left\{
\begin{array}{ll}
\frac{4}{1-\alpha^2}\left(1-\rho_{\frac{1-\alpha}{2}}(p:q)\right), & \alpha\not\in\{-1,1\}\\
D_\KL(p:q), & \alpha=-1\\
D_\KL(q:p), & \alpha=1.
\end{array}
\right.,
$$
where
$$
\rho_\beta(p:q)=\int p^\beta(x)q^{1-\beta}(x) \dmu(x),
$$
is the skew Bhattacharyya coefficient (a similarity measure also called an affinity measure).
The skew Bhattacharyya distance~\cite{nielsen2011burbea} is $D_\mathrm{Bhat}(p:q)=-\log \rho_\beta(p:q)$.
We have $D_\alpha(q:p)=D_{-\alpha}(p:q)$.
We recover the squared Hellinger divergence when $\alpha=0$ and the 
Neyman $\chi^2$-divergence when $\alpha=3$ (and the Pearson $\chi^2$-divergence   when $\alpha=-3$).
The $\alpha$-divergences are $f$-divergences for the following family $f_\alpha(u)$ of generators:
$$
f_\alpha(u)=\left\{
\begin{array}{ll}
\frac{4}{1-\alpha^2}\left(u-u^{\frac{1+\alpha}{2}}\right), & \alpha\not\in\{-1,1\}\\
-\log u, & \alpha=-1\\
u\log u, & \alpha=1.
\end{array}
\right.
$$

We have the following closed-form formula between two scale normal distributions~\cite{pardo2018statistical} (page 46):
\begin{equation}\label{eq:pardo}
\rho_\beta(p_{\mu,\Sigma_1}:p_{\mu,\Sigma_2})=\frac{\det(\Sigma_1)^{\frac{1-\beta}{2}}\, \det(\Sigma_2)^{\frac{\beta}{2}}}{
\det( (1-\beta)\Sigma_1+\beta\Sigma_2)^{\frac{1}{2}}}
\end{equation}

We can rewrite Eq.~\ref{eq:pardo} as follows:
$$
\det( (1-\beta)\Sigma_1+\beta\Sigma_2 ) =\det( I+\beta(\Sigma_2\Sigma_1^{-1}-I)) \, \det(\Sigma_1),
$$
so that
$$
\rho_\beta(p_{\mu,\Sigma_1}:p_{\mu,\Sigma_2})= \frac{\det(\Sigma_2\Sigma_1^{-1})^{\frac{\beta}{2}}}{\det( I+\beta(\Sigma_2\Sigma_1^{-1}-I) )^{\frac{1}{2}}}.
$$

Using the eigenvalues $\lambda_i$'s for $i\in\{1,\ldots, d\}$ of $\Sigma_2\Sigma_1^{-1}$, we have
$$
\rho_\beta(p_{\mu,\Sigma_1}:p_{\mu,\Sigma_2}) = \prod_{i=1}^d \sqrt{\frac{\lambda_i^\beta}{1+\beta(\lambda_i-1)}}.
$$
Indeed, consider the characteristic polynomial 
$$
p_{\Sigma_2\Sigma_1^{-1}}(x)=\Mdet{xI-\Sigma_2\Sigma_1^{-1}}=\prod_{i=1}^d (x-\lambda_i(\Sigma_2\Sigma_1^{-1})).
$$
We have $\Mdet{\Sigma_2\Sigma_1^{-1}}=p_{\Sigma_2\Sigma_1^{-1}}(0)=(-1)^d\prod_{i=1}^d \lambda_i$ and
$$
\Mdet{I+\beta(\Sigma_2\Sigma_1^{-1}-I)}=\beta^d\Mdet{\left(\frac{1}{\beta}-1\right)\Sigma_2\Sigma_1^{-1}}
=\beta^d p_{\Sigma_2\Sigma_1^{-1}}\left(\frac{1}{\beta}-1\right). 
$$

Thus the $\alpha$-divergences or the Bhattacharyya divergences are increasing functions of the $\lvert 1-\lambda_i\rvert$'s as stated by Ali and Silvey~\cite{ali1966general}.

\end{itemize}

Notice that the bounds on the total variation distance between two multivariate Gaussian distributions with same mean has been investigated in~\cite{TVMVN-2018} but no closed-form formula is known.

Finally, we show that the $f$-divergences between two densities of a scale family are always spectral matrix divergences:

\begin{proposition}\label{prop:spectralfdiv}
For location-scale families $\{p_{\mu, \Sigma}(x) = (\det\Sigma)^{-1/2} p(\Sigma^{-1/2} (x-\mu))\}_{\mu, \Sigma}$ where $p$ is the standard density such that $p(x) = \widetilde p(\lvert x \rvert^2)$ for some $\widetilde p$, 
every $f$-divergence  $I_f(p_{\mu,\Sigma_1}:p_{\mu,\Sigma_2})$ between scale family is a function of the eigenvalues of $\Sigma_1 \Sigma_2^{-1}$.
\end{proposition}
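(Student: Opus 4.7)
The plan is to reduce the integral defining $I_f(p_{\mu,\Sigma_1}:p_{\mu,\Sigma_2})$ to one whose integrand depends on $\Sigma_1,\Sigma_2$ only through the spectrum of $\Sigma_1\Sigma_2^{-1}$, exploiting the sphericity of the standard density $\tilde p$. First I would note that the change of variable $x\mapsto x+\mu$ inside the integral removes the parameter $\mu$, so without loss of generality I can take $\mu=0$.

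Next I would whiten with respect to $\Sigma_1$ by setting $y=\Sigma_1^{-1/2}x$, so that $\dx=(\det\Sigma_1)^{1/2}\dy$. Using the sphericity assumption $p(\cdot)=\tilde p(\|\cdot\|^2)$, the density ratio becomes
$$\frac{p_{0,\Sigma_2}(x)}{p_{0,\Sigma_1}(x)}=\sqrt{\frac{\det\Sigma_1}{\det\Sigma_2}}\,\frac{\tilde p\bigl(y^{\top}My\bigr)}{\tilde p(\|y\|^{2})},\qquad M:=\Sigma_1^{1/2}\Sigma_2^{-1}\Sigma_1^{1/2}.$$
The factor $\det\Sigma_1$ from the Jacobian combines with $p_{0,\Sigma_1}(x)\dx$ to leave exactly $\tilde p(\|y\|^{2})\,\dy$ as the base measure, so the divergence becomes
$$I_f\bigl(p_{0,\Sigma_1}:p_{0,\Sigma_2}\bigr)=\int_{\bbR^{d}} f\!\left(\sqrt{\det M}\,\frac{\tilde p(y^{\top}My)}{\tilde p(\|y\|^{2})}\right)\tilde p(\|y\|^{2})\,\dy,$$
where I used $\det M=\det\Sigma_1/\det\Sigma_2$.

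The matrix $M$ is symmetric positive-definite, so I would diagonalize $M=O^{\top}DO$ with $O$ orthogonal and $D=\diag(\lambda_1,\dots,\lambda_d)$. Applying the orthogonal change of variable $z=Oy$ preserves both $\|y\|^{2}=\|z\|^{2}$ and Lebesgue measure, and turns $y^{\top}My$ into $\sum_{i=1}^{d}\lambda_i z_i^{2}$. This yields
$$I_f\bigl(p_{0,\Sigma_1}:p_{0,\Sigma_2}\bigr)=\int_{\bbR^{d}} f\!\left(\sqrt{\textstyle\prod_i\lambda_i}\;\frac{\tilde p\bigl(\sum_i\lambda_i z_i^{2}\bigr)}{\tilde p\bigl(\sum_i z_i^{2}\bigr)}\right)\tilde p\bigl(\textstyle\sum_i z_i^{2}\bigr)\,\dz,$$
an expression that manifestly depends only on $(\lambda_1,\dots,\lambda_d)$ and moreover is invariant under permutations of the $\lambda_i$.

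Finally I would identify the spectrum of $M$ with that of $\Sigma_1\Sigma_2^{-1}$: since $M=\Sigma_1^{1/2}\Sigma_2^{-1}\Sigma_1^{1/2}$ and $\Sigma_1\Sigma_2^{-1}=\Sigma_1^{1/2}\!\cdot\!(\Sigma_1^{1/2}\Sigma_2^{-1})$ is related to $M$ by the similarity $A\mapsto \Sigma_1^{1/2}A\Sigma_1^{-1/2}$ (equivalently, use that $AB$ and $BA$ share eigenvalues with $A=\Sigma_1^{1/2}\Sigma_2^{-1}$, $B=\Sigma_1^{1/2}$), the eigenvalues coincide. This concludes the proof. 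There is no significant obstacle here; the single crucial observation is that sphericity of $\tilde p$ converts the quadratic form $y^{\top}My$ into something that, after orthogonal diagonalization and the symmetry of the base measure $\tilde p(\|z\|^{2})\dz$, is a symmetric function of the $\lambda_i$'s alone.
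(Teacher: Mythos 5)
Your proof is correct and follows essentially the same route as the paper's: reduce to $\mu=0$, whiten by $x=\Sigma_1^{1/2}y$ to isolate the matrix $M=\Sigma_1^{1/2}\Sigma_2^{-1}\Sigma_1^{1/2}$, orthogonally diagonalize it, and identify its spectrum with that of $\Sigma_1\Sigma_2^{-1}$ via the $AB$ versus $BA$ argument. The only difference is cosmetic: you make the orthogonal change of variable $z=Oy$ and the identity $\det M=\det\Sigma_1/\det\Sigma_2$ explicit, whereas the paper passes directly to the diagonalized form.
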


\begin{proof}
We can assume that $\mu=0$. 
By the change-of-variable $x = \Sigma_1^{1/2}y$, 
\[ I_f(p_{\mu,\Sigma_1}:p_{\mu,\Sigma_2}) 
= \int_{\mathbb R^d} f\left(\left(\frac{\det\Sigma_1}{\det\Sigma_2}\right)^{1/2} 
\frac{\widetilde p(x^{\top}\Sigma_2^{-1}x)}{\widetilde p(x^{\top}\Sigma_1^{-1}x)} \right) dx\]
\[ = \int_{\mathbb R^d} f\left(\left(\frac{\det\Sigma_1}{\det\Sigma_2}\right)^{1/2} 
\frac{\widetilde p\left(y^{\top}\Sigma_1^{1/2}\Sigma_2^{-1}\Sigma_1^{1/2}y\right)}{\widetilde p(\lvert y \rvert^2)} \right) dy. \]

Since $\Sigma_1$ and $\Sigma_2$ are both symmetric matrices, $\Sigma_1^{1/2}$ and $\Sigma_2^{-1}$ are both symmetric, and hence,  $\Sigma_1^{1/2}\Sigma_2^{-1}\Sigma_1^{1/2}$ is also symmetric. 
Hence it is diagonalizable by an orthogonal matrix and there exist real eigenvalues $\lambda_1, \dots, \lambda_d$ and 
$\det(\Sigma_1^{1/2}\Sigma_2^{-1}\Sigma_1^{1/2}) = \lambda_1 \cdots \lambda_d$. 
Since $\Sigma_2$ is positive-definite, 
$\Sigma_2^{-1}$ is also positive-definite. 
By this and the fact that $\Sigma_1^{1/2}$ is symmetric, 
$\Sigma_1^{1/2}\Sigma_2^{-1}\Sigma_1^{1/2}$ is positive-definite, and hence, 
$\lambda_1, \dots, \lambda_d$ are all positive. 
Now we recall that the set of eigenvalues of $\Sigma_1^{1/2}\Sigma_2^{-1}\Sigma_1^{1/2}$ is identical with the set of eigenvalues of $\Sigma_1^{1/2}\Sigma_1^{1/2}\Sigma_2^{-1} = \Sigma_1 \Sigma_2^{-1}$, which is a well-known result in linear algebra. 
Hence,
\[ \int_{\mathbb R^d} f\left(\left(\frac{\det\Sigma_1}{\det\Sigma_2}\right)^{1/2} 
\frac{\widetilde p\left(y^{\top}\Sigma_1^{1/2}\Sigma_2^{-1}\Sigma_1^{1/2}y\right)}{\widetilde p(\lvert y \rvert^2)} \right) dy \]
\[ = \int_{\mathbb R^d} f\left(\left(\lambda_1 \cdots \lambda_d\right)^{1/2} 
\frac{\widetilde p\left(\lambda_1 y_1^2 + \cdots + \lambda_d y_d^2\right)}{\widetilde p(\lvert y \rvert^2)} \right) dy_1 \cdots dy_d.\]
\end{proof}

\bibliographystyle{plain}
\bibliography{V3BIB}

\begin{thebibliography}{10}

\bibitem{ali1966general}
Syed~Mumtaz Ali and Samuel~D Silvey.
\newblock A general class of coefficients of divergence of one distribution
  from another.
\newblock {\em Journal of the Royal Statistical Society: Series B
  (Methodological)}, 28(1):131--142, 1966.

\bibitem{IG-2016}
Shun-{I}chi Amari.
\newblock {\em Information Geometry and Its Applications}.
\newblock Applied Mathematical Sciences. Springer Japan, Tokyo, Japan, 2016.

\bibitem{banerjee2005clustering}
Arindam Banerjee, Srujana Merugu, Inderjit~S Dhillon, Joydeep Ghosh, and John
  Lafferty.
\newblock {Clustering with Bregman divergences}.
\newblock {\em Journal of machine learning research}, 6(10), 2005.

\bibitem{MVNGeodesic-1991}
Miquel Calvo and Josep~Maria Oller.
\newblock An explicit solution of information geodesic equations for the
  multivariate normal model.
\newblock {\em Statistics \& Risk Modeling}, 9(1-2):119--138, 1991.

\bibitem{CT-1999}
Thomas~M Cover.
\newblock {\em Elements of information theory}.
\newblock John Wiley \& Sons, Hoboken, NJ, USA, 1999.

\bibitem{csiszar1964informationstheoretische}
Imre Csisz{\'a}r.
\newblock Eine informationstheoretische ungleichung und ihre anwendung auf
  beweis der ergodizitaet von markoffschen ketten.
\newblock {\em Magyer Tud. Akad. Mat. Kutato Int. Koezl.}, 8:85--108, 1964.

\bibitem{davis2006differential}
Jason Davis and Inderjit Dhillon.
\newblock Differential entropic clustering of multivariate gaussians.
\newblock {\em Advances in Neural Information Processing Systems}, 19, 2006.

\bibitem{TVMVN-2018}
Luc Devroye, Abbas Mehrabian, and Tommy Reddad.
\newblock {The total variation distance between high-dimensional Gaussians with
  the same mean}.
\newblock {\em arXiv preprint arXiv:1810.08693}, 2018.

\bibitem{JSD-2004}
Bent Fuglede and Flemming Topsoe.
\newblock {Jensen-Shannon divergence and Hilbert space embedding}.
\newblock In {\em International Symposium onInformation Theory (ISIT)},
  page~31. IEEE, 2004.

\bibitem{globke2021information}
Wolfgang Globke and Raul Quiroga-Barranco.
\newblock Information geometry and asymptotic geodesics on the space of normal
  distributions.
\newblock {\em Information Geometry}, 4(1):131--153, 2021.

\bibitem{khosravifard2007confliction}
Mohammadali Khosravifard, Dariush Fooladivanda, and T~Aaron Gulliver.
\newblock Confliction of the convexity and metric properties in
  $f$-divergences.
\newblock {\em IEICE Transactions on Fundamentals of Electronics,
  Communications and Computer Sciences}, 90(9):1848--1853, 2007.

\bibitem{kollo2005advanced}
Tonu Kollo.
\newblock {\em Advanced multivariate statistics with matrices}.
\newblock Springer, Berlin/Heidelberg, Germany, 2005.

\bibitem{kulis2009low}
Brian Kulis, M{\'a}ty{\'a}s~A Sustik, and Inderjit~S Dhillon.
\newblock {Low-Rank Kernel Learning with Bregman Matrix Divergences}.
\newblock {\em Journal of Machine Learning Research}, 10(2), 2009.

\bibitem{Lin-1991}
Jianhua Lin.
\newblock {Divergence measures based on the Shannon entrop}y.
\newblock {\em IEEE Transactions on Information theory}, 37(1):145--151, 1991.

\bibitem{Mahalanobis-1936}
Prasanta~Chandra Mahalanobis.
\newblock On the generalized distance in statistics.
\newblock {\em Proceedings of the National Institute of Sciences (Calcutta)},
  2:49--55, 1936.

\bibitem{Entropy-2GMM-2008}
Joseph~V Michalowicz, Jonathan~M Nichols, and Frank Bucholtz.
\newblock {Calculation of differential entropy for a mixed Gaussian
  distribution}.
\newblock {\em Entropy}, 10(3):200--206, 2008.

\bibitem{molenberghs1997non}
Geert Molenberghs and Emmanuel Lesaffre.
\newblock Non-linear integral equations to approximate bivariate densities with
  given marginals and dependence function.
\newblock {\em Statistica Sinica}, pages 713--738, 1997.

\bibitem{nielsen2021information}
Frank Nielsen.
\newblock On information projections between multivariate elliptical and
  location-scale families.
\newblock {\em arXiv preprint arXiv:2101.03839}, 2021.

\bibitem{nielsen2011burbea}
Frank Nielsen and Sylvain Boltz.
\newblock {The Burbea-Rao and Bhattacharyya centroids}.
\newblock {\em IEEE Transactions on Information Theory}, 57(8):5455--5466,
  2011.

\bibitem{nielsen2013chi}
Frank Nielsen and Richard Nock.
\newblock On the chi square and higher-order chi distances for approximating
  $f$-divergences.
\newblock {\em IEEE Signal Processing Letters}, 21(1):10--13, 2013.

\bibitem{nielsen2021f}
Frank Nielsen and Kazuki Okamura.
\newblock {On $f$-divergences between Cauchy distributions}.
\newblock {\em arXiv preprint arXiv:2101.12459}, 2021.

\bibitem{ollila2012complex}
Esa Ollila, David~E Tyler, Visa Koivunen, and H~Vincent Poor.
\newblock Complex elliptically symmetric distributions: Survey, new results and
  applications.
\newblock {\em IEEE Transactions on signal processing}, 60(11):5597--5625,
  2012.

\bibitem{pardo2018statistical}
Leandro Pardo.
\newblock {\em Statistical inference based on divergence measures}.
\newblock Chapman and Hall/CRC, Boca Raton, Florida, 2018.

\bibitem{rohban2013impossibility}
Mohammad~H Rohban, Prakash Ishwar, Burkay Orten, William~Clement Karl, and
  Venkatesh Saligrama.
\newblock An impossibility result for high dimensional supervised learning.
\newblock In {\em 2013 IEEE Information Theory Workshop (ITW)}, pages 1--5.
  IEEE, 2013.

\bibitem{steerneman2005spherical}
AGM Steerneman and F~Van Perlo-Ten~Kleij.
\newblock {Spherical distributions: Schoenberg (1938) revisited}.
\newblock {\em Expositiones Mathematicae}, 23(3):281--287, 2005.

\end{thebibliography}

\end{document}